\begin{document}

\title[Analysis of a class of variational multiscale methods]
{An analysis of a class of variational multiscale methods 
 based on subspace decomposition}

\author[Kornhuber]{Ralf Kornhuber}
\address{Institut f\"{u}r Mathematik, 
 Freie Universit\"{a}t Berlin, 
 14195 Berlin, Germany}
\email{kornhuber@math.fu-berlin.de}

\author[Peterseim]{Daniel Peterseim}
\address{Institut f\"{u}r Mathematik, 
 Universit\"{a}t Augsburg, 
 86135 Augsburg, Germany}
\email{daniel.peterseim@math.uni-augsburg.de}
 
\author[Yserentant]{Harry Yserentant}
\address{Institut f\"{u}r Mathematik, 
 Technische Universit\"{a}t Berlin, 
 10623 Berlin, Germany}
\email{yserentant@math.tu-berlin.de}

\thanks{This research was supported by Deutsche 
 Forschungsgemeinschaft through SFB 1114}

\subjclass[2010]{65N12, 65N30, 65N55}

%
%
%
%

\begin{abstract}
Numerical homogenization tries to approximate the solutions 
of elliptic partial differential equations with strongly 
oscillating coefficients by functions from modified finite 
element spaces. We present a class of such methods that 
are closely related to the methods that have recently been 
proposed by M\r{a}lqvist and Peterseim 
[Math. Comp. 83, 2014, pp. 2583--2603]. Like these methods, 
the new methods do not make explicit or implicit use of a 
scale separation. Their comparatively simple analysis is 
based on the theory of additive Schwarz or subspace 
decomposition methods.
\end{abstract}

\maketitle


\newtheorem{theorem}{Theorem}[section]
\newtheorem{lemma}[theorem]{Lemma}

\numberwithin{equation}{section}

\def  \I     {\mathcal{I}}
\def  \N     {\mathcal{N}}
\def  \S     {\mathcal{S}}
\def  \T     {\mathcal{T}}
\def  \V     {\mathcal{V}}
\def  \W     {\mathcal{W}}

\def  \dx    {\,\mathrm{d}x}

\def  \uS    {\widetilde{u}}

\def  \ker   {\mathrm{ker}\,}
\def  \span  {\mathrm{span}\,}

\def  \sp    {\mspace{1mu}}


\section{Introduction}
\label{sec1}

Numerical homogenization aims at a modification of standard 
finite element discretizations that preserve the accuracy 
known from smooth coefficients functions to the case of highly 
oscillatory coefficient functions. A method of this kind, 
which utilizes no separation of scales at all and is founded 
on a comprehensive convergence theory, has recently been 
proposed by M\r{a}lqvist and one of the authors 
\cite{Malqvist-Peterseim}. The central idea of this paper, 
which can be considered as a late descendant of the work 
of Babu\v{s}ka and Osborn \cite{Babuska-Osborn}, is to 
assign to the vertices of the finite elements new basis 
functions that span the modified finite element space 
and reflect the multiscale structure of the problem under 
consideration. In the basic version of the method, the new 
basis functions are the standard piecewise linear hat 
functions minus their orthogonal projection onto a space 
of rapidly oscillating functions, orthogonal with respect 
to the symmetric, coercive bilinear form associated with 
the boundary value problem. They possess a global support 
but decay exponentially from one shell of elements 
surrounding the assigned vertex to the next. It is 
therefore possible to replace them by local counterparts 
without sacrificing the accuracy. The support of these 
localized basis functions consists of a fixed number of 
shells of elements surrounding the associated node. The 
number of these shells increases logarithmically with 
increasing accuracy, that is, decreasing gridsize. 

Here we present a considerably simplified analysis of a class 
of closely related methods that is based on the theory of 
iterative methods, more precisely of additive Schwarz or 
subspace decomposition methods \cite{Xu}, \cite{Yserentant}, 
and essentially utilizes a reinterpretation \cite{Peterseim}
of the method proposed in \cite{Malqvist-Peterseim} in terms 
of variational multiscale methods. Arguments used in the 
present paper are inspired by the work of Demko~\cite{Demko} 
on the inverses of band matrices and have been used, for 
example, in the proof of the $H^1$-stability of the 
$L_2$-orthogonal projection onto finite element spaces 
\cite{Bank-Yserentant}. Our proof works on a more abstract 
level than that in \cite{Malqvist-Peterseim} and is less 
centered around the single finite element basis functions. 
Implicitly, it compares their projections onto the mentioned 
space of rapidly oscillating functions with their iteratively 
calculated approximations and shows in this way that these 
decay exponentially with the distance to the assigned nodes.
It rests, as that of all related results, upon a local version 
of the Poincar\'{e} inequality and thus depends on the local 
contrast ratio. A different class of methods not suffering 
from such restrictions and condensing, similar to numerical 
homogenization methods, the features of the problem under 
consideration in a relatively low dimensional matrix can 
possibly be based on hierarchical matrices \cite{Hackbusch};
see the recent work of Bebendorf \cite{Bebendorf} in 
conjunction with the work of Hackbusch and Drechsler
\cite{Hackbusch-Drechsler}.

The present paper builds a bridge between numerical 
homogenization methods and multigrid-like iterative solvers. 
As a matter of fact, many numerical upscaling techniques 
bear close and often overlooked resemblance to techniques 
used by the multigrid community to develop fast iterative 
methods suitable for problems with rough coefficient 
functions. A prominent example of this kind, not unsimilar 
to our approach, is the technique used by Xu and Zikatanov 
\cite{Xu-Zikatanov} to construct coarse-level basis functions 
for algebraic multigrid methods, another the reference 
\cite{Kornhuber-Yserentant}, in which the direct application 
of a two-grid iterative method to numerical homogenization 
is studied and compared to methods like ours.


\section{The equation and the basic approximation of its solutions}
\label{sec2}

The model problem considered in this paper is a standard second 
order differential equation in weak form with homogeneous 
Dirichlet boundary conditions on a polygonal domain $\Omega$ 
in $d=2$ or $3$ space dimensions. Its solution space is the 
Sobolev space $H_0^1(\Omega)$ and the associated bilinear 
form reads
\begin{equation}    \label{eq2.1}
a(u,v)=\int_\Omega \nabla u\cdot A\nabla v\dx.
\end{equation}
The matrix $A$ is a function of the spatial variable $x$
with measurable entries and is assumed to be symmetric 
positive definite. We assume for simplicity that 
\begin{equation}    \label{eq2.2}
\delta\sp|\eta|^2\leq\,\eta\cdot A(x)\eta\,\leq M|\eta|^2
\end{equation}
holds for all $\eta\in\mathbb{R}^d$ and almost all 
$x\in\Omega$, where $|\eta|$ denotes the euclidean norm 
of $\eta$ and $\delta$ and $M$ are positive constants.
In the same way as in \cite{Kornhuber-Yserentant} it is 
possible to replace this condition by an, at least in the
quantitative sense, weaker local condition on the contrast 
ratio. The condition (\ref{eq2.2}) guarantees that the 
bilinear form (\ref{eq2.1}) is an inner product on 
$H_0^1(\Omega)$. It induces the energy norm $\|\cdot\|$, 
which is equivalent to the original norm on this space. 
The boundary value problem
\begin{equation}    \label{eq2.3}
a(u,v)=f^*(v),\quad v\in H_0^1(\Omega),
\end{equation}
possesses by the Lax-Milgram theorem under the condition 
(\ref{eq2.2}) for all bounded linear functionals $f^*$ 
on $H_0^1(\Omega)$ a unique solution $u\in H_0^1(\Omega)$. 

We cover the domain $\Omega$ with a triangulation $\T$,
consisting of triangles in two and of tetrahedrons in 
three space dimensions. We assume that the elements in 
$\T$ are shape regular but do not require that $\T$ is 
quasiuniform. Associated with $\T$ is the conforming, 
piecewise linear finite element subspace $\S$ of 
$H_0^1(\Omega)$. A key ingredient of the methods 
discussed here is a bounded local linear projection 
operator 
\begin{equation}    \label{eq2.4}
\Pi:H_0^1(\Omega)\,\to\,\S:u\,\to\,\Pi u
\end{equation}
like that defined as follows. At first, the given function
$u\in H_0^1(\Omega)$ is locally, on the single elements
$t\in\T$, approximated by its $L_2$-orthogonal projection
onto the space of linear functions, regardless of the 
continuity across the boundaries of the elements.
In a second step, the values of these approximants at 
a vertex in the interior of the domain are replaced by 
a weighted mean, according to the contribution of the 
involved elements to the area or the volume of their 
union. The values at the vertices on the boundary are 
set to zero. Together, these values fix the projection 
$\Pi u$ of $u$ onto $\S$. For functions 
$u\in H_0^1(\Omega)$ then the estimates
\begin{equation}    \label{eq2.5}
|\Pi u|_1\leq c_1|\sp u\sp |_1, \quad 
\|h^{-1}(u-\Pi u)\|_0\leq c_2|\sp u\sp |_1
\end{equation}
hold, where $\|\cdot\|_0$ denotes the $L_2$-norm,
$|\cdot|_1$ the $H^1$-seminorm, and $h$ is an
elementwise constant function whose value on the
interior of a given element is its diameter. The 
first condition means that the projection operator 
(\ref{eq2.4}) is stable with respect to the 
$H^1$-norm and therefore also with respect to the 
energy norm. The second condition is an approximation 
property. The constants $c_1$ and $c_2$ depend, as 
with any other reasonable choice of $\Pi$, only on 
the shape regularity of the finite elements, but 
not on their size. Quasi-interpolation operators 
are a common tool in finite element theory. The 
use of quasi-interpolation operators that are at 
the same time projections onto the finite element 
space under consideration can be traced back 
to the work of Brenner \cite{Brenner} and Oswald 
\cite{Oswald}. The operator described above falls 
into this category and is analyzed in the appendix 
to this paper. A comprehensive recent presentation 
of such constructions can be found in 
\cite{Ern-Guermond}.

The kernel $\V=\ker\Pi$ of $\Pi$ is a closed subspace of 
$H_0^1(\Omega)$ and therefore itself a Hilbert space.
We can therefore introduce the $a$-orthogonal projection 
operator~$C$ from~$H_0^1(\Omega)$ onto the kernel of 
$\Pi$ and moreover the finite dimensional subspace
\begin{equation}    \label{eq2.6}
\W=\{v-Cv\,|\,v\in\sp \S\}
\end{equation}
of the $a$-orthogonal complement of the kernel of $\Pi$.
The dimension of $\W$ and of the finite element space 
$\S$ coincide as $v\in\S$ can be recovered from 
$v-Cv$ via
\begin{equation}    \label{eq2.7}
v\,=\,\Pi\sp (v-Cv).
\end{equation}
Analogously to the approach in \cite{Malqvist-Peterseim}, 
we discretize the equation (\ref{eq2.3}) using $\W$ 
both as trial and test space. The following representation
of the approximate solution is based on observations made
in \cite{Peterseim}, where a computationally more 
advantageous nonsymmetric variant with $\S$ as trial 
and $\W$ as test space is advocated.

\begin{lemma}[Peterseim \cite{Peterseim}, cf. also 
M\r{a}lqvist and Peterseim \cite{Malqvist-Peterseim}]
\label{lm2.1}
The equation 
\begin{equation}    \label{eq2.8}
a(w,\chi)=f^*(\chi), \quad \chi\in\W,
\end{equation}
possesses a unique solution $w\in\W$, namely
the projection
\begin{equation}    \label{eq2.9}
w=\Pi u-C\Pi u
\end{equation}
of the exact solution $u\in H_0^1(\Omega)$ 
of equation {\rm (\ref{eq2.3})}
onto the space $\W$. The error
\begin{equation}    \label{eq2.10}
u-w=Cu
\end{equation}
is the $a$-orthogonal projection $Cu$ of 
the solution $u$ onto the kernel of $\Pi$.
\end{lemma}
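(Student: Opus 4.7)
The plan is to establish the three assertions in reverse order: first derive the error identity (\ref{eq2.10}), then read off the formula (\ref{eq2.9}) from it, and finally settle existence and uniqueness for (\ref{eq2.8}).

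The starting observation I would exploit is that by the very definition (\ref{eq2.6}) of $\W$, every element $\chi\in\W$ is $a$-orthogonal to $\ker\Pi$: if $\chi=v-Cv$ with $v\in\S$, then $\chi$ is the component of $v$ in the $a$-orthogonal complement of $\ker\Pi$, so $a(\chi,\psi)=0$ for all $\psi\in\ker\Pi$. Since $Cu\in\ker\Pi$, this immediately yields $a(Cu,\chi)=0$ for every $\chi\in\W$, and therefore
\begin{equation*}
a(u-Cu,\chi)\,=\,a(u,\chi)\,=\,f^*(\chi),\quad \chi\in\W,
\end{equation*}
using (\ref{eq2.3}) in the last step. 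Hence $u-Cu$ has exactly the Galerkin property required of $w$.

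The remaining task is to check that $u-Cu$ actually lies in $\W$ and coincides with $\Pi u-C\Pi u$. Both facts will follow from the single identity $u-\Pi u\in\ker\Pi$, which in turn is just the statement that $\Pi$ is a projection ($\Pi^2=\Pi$). Since $C$ acts as the identity on $\ker\Pi$, I obtain $C(u-\Pi u)=u-\Pi u$ and, rearranging, $u-Cu=\Pi u-C\Pi u$. Because $\Pi u\in\S$, the right-hand side is an element of $\W$ by (\ref{eq2.6}). This simultaneously proves (\ref{eq2.9}) and (\ref{eq2.10}) once existence is granted.

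Existence and uniqueness for (\ref{eq2.8}) are not the difficult part: the bilinear form $a$ is an inner product on $H_0^1(\Omega)$ by (\ref{eq2.2}), so in particular it is coercive on the finite-dimensional subspace $\W$, and the Lax--Milgram theorem applies. I do not anticipate a genuine technical obstacle anywhere in this argument; the one subtlety worth flagging is the algebraic fact that $\W$ lies in the $a$-orthogonal complement of $\ker\Pi$, which makes the splitting $u=(u-Cu)+Cu$ an $a$-orthogonal decomposition into a piece in $\W$ and a piece tested to zero by every $\chi\in\W$. Everything else is bookkeeping.
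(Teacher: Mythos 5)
Your argument is correct and is essentially the paper's proof in reverse order: both rest on the same two facts, namely that $u-\Pi u\in\ker\Pi$ (so $C(u-\Pi u)=u-\Pi u$, giving $u-Cu=\Pi u-C\Pi u\in\W$) and that $\W$ is $a$-orthogonal to the range of $C$ (giving the Galerkin property). Your explicit remarks on membership of $u-Cu$ in $\W$ and on uniqueness via coercivity of $a$ on the finite-dimensional space $\W$ are points the paper leaves implicit, but they do not change the route.
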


\begin{proof}
As $\Pi$ is a projection operator, $u-\Pi u$ is 
contained in the kernel of $\Pi$. The difference
of the exact solution $u$ and the function 
(\ref{eq2.9}) can therefore be written as
\begin{displaymath}    
u-w=(u-\Pi u)-C(u-\Pi u)+Cu=Cu.
\end{displaymath}
Because the functions in $\W$ are $a$-orthogonal
to the functions in the range of $C$,
\begin{displaymath}    
a(u-w,\chi)=\sp 0, \quad \chi\in\W, 
\end{displaymath}    
follows. That is, this $w$ is the unique solution 
of the equation (\ref{eq2.8}).
\end{proof}

The energy norm of $Cu$ can easily be estimated 
for a right-hand side 
\begin{equation}    \label{eq2.11}
f^*(v)=\int_\Omega fv\dx.
\end{equation}
One obtains in this way the following, rather 
surprising error estimate.

\begin{theorem}
[M\r{a}lqvist and Peterseim \cite{Malqvist-Peterseim}]  
\label{thm2.2}
For right-hand sides of the form {\rm (\ref{eq2.11})},
the approximate solution {\rm (\ref{eq2.9})} 
satisfies the energy norm error estimate
\begin{equation}    \label{eq2.12}
\|u-w\|\leq c\,\|hf\|_0,
\end{equation}
where the constant $c$ depends only on the constants 
$c_2$ from {\rm (\ref{eq2.5})} and $\delta$ from 
{\rm (\ref{eq2.2})}.
\end{theorem}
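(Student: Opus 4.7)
The plan is to estimate $\|u-w\|=\|Cu\|$ directly, exploiting that $Cu$ lies in the kernel of $\Pi$ and is $a$-orthogonal to everything outside that kernel, which will let us test the original weak equation against it.

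First I would rewrite the energy norm as
\begin{equation*}
\|Cu\|^2 = a(Cu,Cu) = a(u,Cu),
\end{equation*}
where the second identity uses that $C$ is the $a$-orthogonal projection onto $\V=\ker\Pi$ and $Cu\in\V$. Since $Cu\in H_0^1(\Omega)$ is an admissible test function in \eqref{eq2.3}, this equals
\begin{equation*}
f^*(Cu) = \int_\Omega f\,(Cu)\dx.
\end{equation*}

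The next step is to bound the right-hand side by $\|hf\|_0$ times a weighted $L_2$-norm of $Cu$. The crucial observation is that $\Pi(Cu)=0$, so the approximation estimate from \eqref{eq2.5}, applied to $Cu$, becomes a pure $L_2$-bound:
\begin{equation*}
\|h^{-1}Cu\|_0 = \|h^{-1}(Cu-\Pi Cu)\|_0 \leq c_2\,|Cu|_1.
\end{equation*}
By Cauchy--Schwarz with the weight $h$,
\begin{equation*}
\int_\Omega f\,(Cu)\dx \,=\, \int_\Omega (hf)(h^{-1}Cu)\dx \,\leq\, \|hf\|_0\,\|h^{-1}Cu\|_0 \,\leq\, c_2\,\|hf\|_0\,|Cu|_1.
\end{equation*}

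Finally I would pass from the $H^1$-seminorm to the energy norm using the lower ellipticity bound from \eqref{eq2.2}, which gives $\delta\,|v|_1^2 \leq \|v\|^2$ and hence $|Cu|_1 \leq \delta^{-1/2}\|Cu\|$. Chaining the inequalities yields
\begin{equation*}
\|Cu\|^2 \,\leq\, c_2\,\delta^{-1/2}\,\|hf\|_0\,\|Cu\|,
\end{equation*}
and after dividing by $\|Cu\|$ one obtains \eqref{eq2.12} with the explicit constant $c=c_2/\sqrt{\delta}$.

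I do not see a real obstacle here; the argument is essentially a duality trick in which the approximation property of $\Pi$ compensates for the absence of any regularity assumption on $u$. The only point requiring a moment's care is the legitimacy of testing with $Cu$ and the identity $a(Cu,Cu)=a(u,Cu)$, both of which follow from $Cu\in H_0^1(\Omega)\cap\V$ and the defining $a$-orthogonality of the projection $C$.
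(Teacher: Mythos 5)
Your proof is correct and is essentially identical to the paper's: the paper writes the key identity as $\|Cu\|^2=a(u,Cu-\Pi\sp Cu)=(f,Cu-\Pi\sp Cu)$, which coincides with your $\|Cu\|^2=a(u,Cu)=(f,Cu)$ once one notes, as you do explicitly, that $\Pi\sp Cu=0$. The remaining steps (weighted Cauchy--Schwarz, the approximation property from (\ref{eq2.5}), and the ellipticity bound (\ref{eq2.2}) yielding $c=c_2/\sqrt{\delta}$) match the paper's argument exactly.
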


\begin{proof}
The proof is based on the representation (\ref{eq2.10}) 
of the error as $a$-orthogonal projection $Cu$ of 
the solution $u$ onto the kernel of $\Pi$ and starts 
from the identity
\begin{displaymath}
\|Cu\|^2=\,a(u,Cu-\Pi\sp Cu)=(f,Cu-\Pi\sp Cu),
\end{displaymath}
from which one obtains the estimate
\begin{displaymath}
\|Cu\|^2\leq\,\|hf\|_0\|h^{-1}(Cu-\Pi\sp Cu)\|_0.
\end{displaymath}
The second factor on the right-hand side is estimated
with help of the error estimate from (\ref{eq2.5}),
and the $H^1$-seminorm of $Cu$, then with (\ref{eq2.2}) 
by its energy norm.
\end{proof}

Remarkably, neither the smoothness of the solution $u$ 
nor the regularity properties of the equation enter 
into this error estimate. The size of the error bound 
is determined by the local behavior of the right-hand
side $f$.


\section{Localization}
\label{sec3}

Let $x_1,x_2,\ldots,x_n$ be the vertices of the 
elements in the triangulation $\T$ and 
let~$\varphi_1,\varphi_2,\ldots,\varphi_n$ be the 
piecewise linear hat functions assigned to these
nodes. The $\varphi_i$ assigned to the nodes in 
the interior of the domain $\Omega$ then form a 
basis of the finite element space $\S$, and the 
corresponding functions $\varphi_i-C\varphi_i$ a 
basis of the trial space (\ref{eq2.6}). It has 
been shown in \cite{Malqvist-Peterseim} that 
these basis functions decay exponentially with 
the distance to the assigned nodes and can 
therefore be replaced by localized counterparts. 
We deviate here from the arguments there and 
utilize the theory of iterative methods to prove 
a result of similar kind. Let $\omega_i$ be the 
union of the finite elements with vertex $x_i$ 
and let 
\begin{equation}    \label{eq3.1} 
\V_i=\{v-\Pi v\,|\,v\in H_0^1(\omega_i)\}.
\end{equation}
The functions in $\V_i$ vanish outside a small 
neighborhood of the vertex $x_i$, depending on 
the choice of $\Pi$. For the exemplary operator 
mentioned earlier, this neighborhood consists of 
the two shells of elements surrounding $x_i$.
The $\V_i$ are closed subspaces of the kernel $\V$ 
of $\Pi$. This can be seen as follows. Let the 
$v_k\in\V_i$ converge to the function $v\in\V$. 
As the $v_k$ are piecewise linear outside 
$\omega_i$, the same holds for $v$. Thus there 
exists a function $\varphi$ in the finite element 
space $\S$ such that $v-\varphi\in H_0^1(\omega_i)$. 
Because $\Pi v=0$ and $\Pi\varphi=\varphi$, then
\begin{equation}    \label{eq3.2}
v=(v-\varphi)-\Pi(v-\varphi)\in\V_i.
\end{equation}
Let $P_i$ be the $a$-orthogonal projection from 
$H_0^1(\Omega)$ to $\V_i$, defined via the 
equation
\begin{equation}    \label{eq3.3}
a(P_iv,v_i)=a(v,v_i),\quad v_i\in\V_i.
\end{equation}
Introducing the, with respect to the bilinear 
form (\ref{eq2.1}), symmetric operator
\begin{equation}    \label{eq3.4}
T=P_1+P_2+\cdots+P_n,
\end{equation}
the approximation spaces replacing $\W$ are built 
up with the help of the bounded linear operators 
$F_\nu$ from $H_0^1(\Omega)$ to $\V$ that are, 
starting from $F_0u=0$, defined via
\begin{equation}    \label{eq3.5}
F_{\nu+1}u=F_\nu u+T(u-F_\nu u). 
\end{equation}
The correction $T(u-F_\nu u)$ is the sum of its
components $d_i=P_i(u-F_\nu u)$ in the subspaces 
$\V_i$ of $\V$, the solutions $d_i\in\V_i$ of 
the local equations
\begin{equation}    \label{eq3.6}
a(d_i,v_i)=a(u,v_i)-a(F_\nu u,v_i), \quad v_i\in\V_i.
\end{equation}
The new trial and test spaces are the spaces 
$\W_\ell$ spanned by the functions 
\begin{equation}    \label{eq3.7}
\varphi_i-F_\nu\varphi_i, \quad \nu=0,1,\ldots,\ell, 
\end{equation}
attached to the nodes $x_i$ in the interior 
of the domain $\Omega$. In contrast to their 
counterparts $\varphi_i-C\varphi_i$ spanning 
the original space $\W$ they have a local 
support, which expands layer by layer with 
the number $\nu$ of iterations.

To study the approximation properties of these 
spaces $\W_\ell$, we consider optimally or almost 
optimally chosen fixed linear combinations
\begin{equation}    \label{eq3.8}
C_\ell=\sum_{\nu=0}^\ell\alpha_{\ell\nu}F_\nu,
\quad 
\sum_{\nu=0}^\ell\alpha_{\ell\nu}=1,
\end{equation}
of the operators $F_\nu$ as approximations of the 
$a$-orthogonal projection $C$. These operators 
$C_\ell$ serve here solely as a tool and do not 
need to be explicitly accessible. Our analysis 
is based on the theory of additive Schwarz or 
subspace decomposition methods, here applied to
an equation in the kernel $\V$. Key is the 
following lemma.

\begin{lemma}       \label{lm3.1}
For all $v\in\V$, there is a with respect 
to the energy norm stable decomposition 
$v=v_1+\cdots +v_n$ of $v$ into functions 
$v_i$ in the spaces $\V_i$, such that
\begin{equation}    \label{eq3.9}
\sum_{i=1}^n\|v_i\|^2\leq K_1\|v\|^2
\end{equation}
holds, where the constant $K_1$ depends only 
on the constants $c_1$ and $c_2$ from 
{\rm (\ref{eq2.5})}, on the shape regularity 
of the finite elements, and on the contrast 
ratio $M/\delta$. Moreover, there is a 
constant $K_2$ such that
\begin{equation}    \label{eq3.10}
\|v\|^2\leq K_2\sum_{i=1}^n\|v_i\|^2
\end{equation}
holds for all such decompositions of $v$ 
into functions $v_i$ in the subspaces 
$\V_i$ of the kernel. This constant 
depends only on the shape regularity of 
the finite elements.
\end{lemma}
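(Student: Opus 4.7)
\smallskip

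My plan is to handle the two inequalities separately, starting with the easier upper bound (\ref{eq3.10}) and then constructing an explicit stable decomposition for (\ref{eq3.9}).

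For (\ref{eq3.10}), I would exploit the fact that, for any admissible choice of $\Pi$, the functions in $\V_i$ vanish outside a fixed neighborhood of $x_i$ consisting of a bounded number of element shells. By shape regularity, every point of $\Omega$ lies in at most a fixed number $N$ of these neighborhoods. Expanding $\|\sum_i v_i\|^2 = \sum_{i,j} a(v_i,v_j)$ and keeping only pairs with overlapping support, a single Cauchy--Schwarz step then gives (\ref{eq3.10}) with $K_2$ depending only on the shape regularity.

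For (\ref{eq3.9}), the natural construction is a partition-of-unity decomposition. Using that the piecewise linear hat functions $\varphi_1,\ldots,\varphi_n$ form a partition of unity on $\overline{\Omega}$, I would set
\begin{equation*}
v_i \,=\, v\varphi_i-\Pi(v\varphi_i).
\end{equation*}
Since $\varphi_i$ vanishes on $\partial\omega_i\setminus\partial\Omega$ and $v$ vanishes on $\partial\Omega$, we have $v\varphi_i\in H_0^1(\omega_i)$, hence $v_i\in\V_i$. Because $v\in\V$, linearity of $\Pi$ yields $\sum_i v_i=v-\Pi v=v$, so the decomposition is admissible. To bound $\sum_i\|v_i\|^2$, I would first pass from the energy norm to the $H^1$-seminorm via (\ref{eq2.2}) and use the $H^1$-stability part of (\ref{eq2.5}) to get
\begin{equation*}
|v_i|_1 \,\leq\, (1+c_1)\,|v\varphi_i|_1.
\end{equation*}
The product rule then gives $|v\varphi_i|_1^2\lesssim |v|_{1,\omega_i}^2+\|h^{-1}v\|_{0,\omega_i}^2$, where I use $\|\nabla\varphi_i\|_\infty\lesssim 1/h$ on $\omega_i$. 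Summing over $i$, finite overlap of the $\omega_i$'s bounds the right-hand side by a constant times $|v|_1^2+\|h^{-1}v\|_0^2$.

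The crux is the $\|h^{-1}v\|_0$ term, which is \emph{a priori} uncontrolled by $|v|_1$. This is where membership in the kernel is decisive: since $v\in\V$ means $v=v-\Pi v$, the approximation property in (\ref{eq2.5}) gives $\|h^{-1}v\|_0\leq c_2|v|_1$, closing the estimate. Converting back from the $H^1$-seminorm to the energy norm via (\ref{eq2.2}) produces (\ref{eq3.9}), with $K_1$ depending only on $c_1$, $c_2$, the shape regularity, and the contrast ratio $M/\delta$, as claimed. I expect the main conceptual obstacle to be recognizing that the troublesome $v/h$ contribution from the partition of unity is exactly what the approximation property of $\Pi$ is designed to absorb on the kernel $\V$; once this is in place, the remainder is routine bookkeeping with finite overlap.
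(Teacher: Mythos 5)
Your proposal is correct and follows essentially the same route as the paper: the upper bound via finite overlap of the supports, and the lower bound via the partition-of-unity decomposition $v_i=\varphi_i v-\Pi(\varphi_i v)$, with the $H^1$-stability of $\Pi$ handling the projection and the approximation property from (\ref{eq2.5}) absorbing the $\|h^{-1}v\|_0$ term precisely because $v$ lies in the kernel. You correctly identified that last step as the crux, which is exactly where the paper's argument turns as well.
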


\begin{proof}
The upper estimate (\ref{eq3.10}) is rather trivial
because $K_2$ can be bounded in terms of the maximum 
number of the parts $v_i$ that do not vanish on a 
given element. We use that the $\varphi_i$ form a
partition of unity and prove that (\ref{eq3.9})
holds for the decomposition of a function $v$
in the kernel $\V$ of $\Pi$ into the parts
\begin{displaymath}
v_i=\varphi_i v-\Pi\sp (\varphi_i v), 
\quad i=1,\ldots,n,
\end{displaymath}
in $\V_i$. It suffices to prove that this decomposition 
is $H^1$-stable. By the $H^1$-stability of the projection 
$\Pi$ and the shape regularity of the finite elements 
one obtains
\begin{displaymath}
\sum_{i=1}^n|\sp v_i\sp |_1^2
\,\lesssim\,
\sum_{i=1}^n|\varphi_i v|_1^2
\,\lesssim\, 
|\sp v\sp |_1^2\sp +\|h^{-1}v\|_0^2.
\end{displaymath}
Using once more that $\Pi v=0$, the second term
on the right-hand side can, by the approximation 
property from (\ref{eq2.5}), be estimated as
\begin{displaymath}
\|h^{-1}v\|_0=\sp \|h^{-1}(v-\Pi v)\|_0
\lesssim\sp |\sp v\sp |_1.
\end{displaymath}
Since the $H^1$-seminorm can, because of the assumption 
(\ref{eq2.2}), be estimated by the energy norm and vice 
versa, the estimate (\ref{eq3.9}) follows and the proof 
is complete.
\end{proof}

With the help of the estimates (\ref{eq3.9}) and 
(\ref{eq3.10}) one can show that
\begin{equation}    \label{eq3.11}
1/K_1\sp a(v,v)\leq a(Tv,v)\leq K_2\sp a(v,v)
\end{equation}
holds for the functions $v$ in the kernel $\V$ 
of $\Pi$. The spectrum of $T$, seen as an 
operator from $\V$ to itself, is therefore a 
compact subset of the interval with the endpoints 
$1/K_1$ and $K_2$. Because
$I-F_\nu=(I-T)^\nu$ and $F_\nu C=F_\nu$,
\begin{equation}    \label{eq3.12}
C-C_\ell=
\bigg\{\sum_{\nu=0}^\ell\alpha_{\ell\nu}(I-T)^\nu\bigg\}\,C.
\end{equation}
Using the spectral mapping theorem and the fact
that the norm of a bounded, symmetric operator 
from a Hilbert space to itself is equal to its
spectral radius, one gets therefore similarly 
to the finite dimensional case the following 
error estimate.

\begin{lemma}       \label{lm3.2}
If the weights $\alpha_{\ell\nu}$ are optimally 
chosen, the estimate
\begin{equation}    \label{eq3.13}
\|C u-C_\ell u\|\leq 
\frac{2\sp q^{\sp \ell}}{1+q^{\sp 2\ell}}
\,\|Cu\|
\end{equation}
holds for all $u\in H_0^1(\Omega)$, where 
the convergence rate 
\begin{equation}    \label{eq3.14}
q=\frac{\sqrt{\kappa}-1}{\sqrt{\kappa}+1}
\end{equation}
is determined by the condition number 
$\kappa\leq K_1K_2$ of the operator 
{\rm (\ref{eq3.4})} seen as bounded, 
symmetric operator from the subspace 
$\V$ of $H_0^1(\Omega)$ to itself.
\end{lemma}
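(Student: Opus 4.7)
The plan is to convert the operator identity (\ref{eq3.12}) into a scalar polynomial optimization problem via the spectral calculus for the $a$-symmetric operator $T$ restricted to $\V$, and then to minimize over the weights $\alpha_{\ell\nu}$ using the classical Chebyshev extremal polynomial.

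First, I would observe that $Cu\in\V$ and that $T$ (hence $I-T$) maps $\V$ into itself, since each $P_i$ projects onto $\V_i\subset\V$. Setting $q_\ell(s):=\sum_{\nu=0}^{\ell}\alpha_{\ell\nu}s^\nu$, the identity (\ref{eq3.12}) reads $Cu-C_\ell u=q_\ell(I-T)Cu$, and the normalization $\sum_\nu\alpha_{\ell\nu}=1$ becomes $q_\ell(1)=1$. Let $[\alpha,\beta]$ be the smallest interval containing the spectrum of $T|_\V$; by (\ref{eq3.11}) one has $\alpha\geq 1/K_1$, $\beta\leq K_2$, and the condition number $\kappa=\beta/\alpha$ therefore satisfies $\kappa\leq K_1K_2$. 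Because $T|_\V$ is $a$-self-adjoint on the Hilbert space $(\V,a(\cdot,\cdot))$, so is $q_\ell(I-T)|_\V$, and its operator norm coincides with its spectral radius. Writing $p_\ell(t):=q_\ell(1-t)$, which is a polynomial of degree at most $\ell$ with $p_\ell(0)=1$, the spectral mapping theorem yields
\[
\|Cu-C_\ell u\|\,\leq\,\Bigl(\max_{t\in[\alpha,\beta]}|p_\ell(t)|\Bigr)\,\|Cu\|.
\]

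Second, I would minimize the right-hand side over all polynomials $p$ of degree at most $\ell$ normalized by $p(0)=1$. This is the standard Chebyshev extremal problem on $[\alpha,\beta]$: the minimizer is $p^*(t)=T_\ell(s(t))/T_\ell(s(0))$, where $T_\ell$ is the degree-$\ell$ Chebyshev polynomial of the first kind and $s$ is the affine map sending $[\alpha,\beta]$ onto $[-1,1]$, and the minimum value equals $1/T_\ell((\beta+\alpha)/(\beta-\alpha))=1/T_\ell((\kappa+1)/(\kappa-1))$. Writing $(\kappa+1)/(\kappa-1)=\cosh\eta$ with $\eta>0$, a short calculation from $\cosh\eta=(\kappa+1)/(\kappa-1)$ identifies $e^{-\eta}=(\sqrt{\kappa}-1)/(\sqrt{\kappa}+1)=q$, whence
\[
T_\ell\!\Bigl(\frac{\kappa+1}{\kappa-1}\Bigr)\,=\,\cosh(\ell\eta)\,=\,\frac{q^\ell+q^{-\ell}}{2}\,=\,\frac{1+q^{2\ell}}{2q^\ell},
\]
and taking reciprocals produces exactly the constant in (\ref{eq3.13}).

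The main obstacle, and essentially the only genuine computation, is the explicit identification $e^{-\eta}=q$ starting from $\cosh\eta=(\kappa+1)/(\kappa-1)$; everything else is a verbatim transcription of the spectral theorem and the standard Chebyshev extremality result into the setting at hand. A minor ancillary point is to note that the estimate $\kappa\leq K_1K_2$ plays no role in the proof of (\ref{eq3.13}) itself, which holds for the true spectral condition number $\kappa$ of $T|_\V$, and is only recorded in (\ref{eq3.14}) as an a priori bound obtained from (\ref{eq3.11}).
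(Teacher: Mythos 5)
Your proof is correct and follows exactly the route the paper sketches: the operator identity (\ref{eq3.12}) plus the spectral mapping theorem and the norm--spectral-radius identity for the $a$-self-adjoint operator $T|_{\V}$ reduce everything to the classical Chebyshev extremal problem on the spectral interval, and your computation $q+q^{-1}=2(\kappa+1)/(\kappa-1)$ correctly identifies $e^{-\eta}=q$ and hence the constant $2q^{\ell}/(1+q^{2\ell})$. You have simply supplied the standard details that the paper delegates to the literature.
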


The distance of $C_\ell u$ to $C u$ can thus be 
estimated in terms of the two constants $K_1$ and 
$K_2$ and the norm of $Cu$ and tends exponentially 
in the number $\ell$ of iterations to zero. The 
lemma is basically a reformulation of a standard 
result from the theory of subspace decomposition 
methods \cite{Xu}, \cite{Yserentant}, where $Cu$ is 
here the solution of the equation and the $C_\ell u$ 
are its iteratively generated approximations. We 
refer to \cite{Kornhuber-Yserentant} for the 
missing details of the here only sketched proof. 

\begin{theorem}     \label{thm3.3}
Let $w$ and $w_\ell$ be the best approximations 
of the solution $u$ of the original equation 
{\rm (\ref{eq2.3})} in $\W$ and $\W_\ell$ with 
respect to the energy norm. Then
\begin{equation}    \label{eq3.15}
\|u-w_\ell\|\,\leq\,
\bigg(1+\frac{2\sp q^{\sp \ell}}{1+q^{\sp 2\ell}}\bigg)\|u-w\|+
\frac{2\sp q^{\sp \ell}}{1+q^{\sp 2\ell}}\,\|u-\Pi u\|.
\end{equation}
\end{theorem}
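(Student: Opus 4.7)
The plan centers on producing an explicit near-best approximation in $\W_\ell$ built from $C_\ell$, then reducing the analysis to the error estimate of Lemma~\ref{lm3.2} by splitting off $Cu=u-w$ via the identity from Lemma~\ref{lm2.1}.

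First I would verify the inclusion $\Pi u-C_\ell\Pi u\in\W_\ell$. Because the weights in (\ref{eq3.8}) satisfy $\sum_\nu\alpha_{\ell\nu}=1$, one has $v-C_\ell v=\sum_\nu\alpha_{\ell\nu}(v-F_\nu v)$ for every $v\in H_0^1(\Omega)$. For $v\in\S$ expanded in the interior hat-function basis this is, by linearity, a linear combination of the spanning functions $\varphi_i-F_\nu\varphi_i$ of $\W_\ell$. Since $\Pi u\in\S$ and its values at boundary nodes vanish by construction of $\Pi$, the function $\Pi u-C_\ell\Pi u$ belongs to $\W_\ell$, and the best-approximation property of $w_\ell$ yields
\[
\|u-w_\ell\|\le\|u-(\Pi u-C_\ell\Pi u)\|.
\]

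Next I would manipulate the right-hand side algebraically. From Lemma~\ref{lm2.1}, $w=\Pi u-C\Pi u$ and $u-w=Cu$; adding and subtracting $C\Pi u$ gives the identity
\[
u-(\Pi u-C_\ell\Pi u)=(u-w)+(C_\ell-C)\Pi u.
\]
The triangle inequality combined with Lemma~\ref{lm3.2} applied to the argument $\Pi u$ then bounds
\[
\|u-w_\ell\|\le\|u-w\|+\frac{2\sp q^{\sp\ell}}{1+q^{\sp2\ell}}\,\|C\Pi u\|,
\]
and a further triangle inequality using $C\Pi u=\Pi u-w$ produces $\|C\Pi u\|\le\|u-w\|+\|u-\Pi u\|$. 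Substituting this in and collecting the terms proportional to $\|u-w\|$ yields exactly (\ref{eq3.15}).

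The main obstacle is conceptual rather than technical: one must recognize that $\Pi u-C_\ell\Pi u$ is the right competitor and that the normalization $\sum_\nu\alpha_{\ell\nu}=1$ in (\ref{eq3.8}) is precisely what places it into $\W_\ell$. Once this observation is in hand, the chain of identities is short and the contraction factor $2q^\ell/(1+q^{2\ell})$ is inherited directly from Lemma~\ref{lm3.2}, so no further spectral or subspace-decomposition argument is needed.
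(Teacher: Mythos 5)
Your proposal is correct and follows essentially the same route as the paper: compare $u-w_\ell$ against the competitor $\Pi u-C_\ell\Pi u\in\W_\ell$, invoke Lemma~\ref{lm3.2} for $\|(C-C_\ell)\Pi u\|$, and bound $\|C\Pi u\|$ via $C\Pi u=(u-w)-(u-\Pi u)$. The only addition is that you spell out why the normalization $\sum_\nu\alpha_{\ell\nu}=1$ places $\Pi u-C_\ell\Pi u$ in $\W_\ell$, a point the paper leaves implicit.
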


\begin{proof}
Because $w_\ell$ is the best approximation of $u$
by a function in $\W_\ell$, because the function 
$\Pi u-C_\ell\Pi u$ is contained in this space,
and because $w=\Pi u-C\Pi u$, we~have
\begin{displaymath}
\|u-w_\ell\|\leq\|u-(\Pi u-C_\ell\Pi u)\|
=\|(u-w)-(C\Pi u-C_\ell\Pi u)\|.
\end{displaymath}
The distance of $C\Pi u$ and $C_\ell\Pi u$ can 
according to Lemma~\ref{lm3.2} be estimated by 
the energy norm of $C\Pi u$. As 
$C\Pi u=(u-w)-(u-\Pi u)$, this leads to 
(\ref{eq3.15}).
\end{proof}

We conclude that logarithmically many iteration steps 
$\nu$ or even less, depending on the behavior of the 
energy norm of $u-\Pi u$, suffice to reach the same 
level of accuracy as with the original space $\W$ 
based on the exact projection $C$.
The best approximation $w_\ell$ of the solution $u$
in the space $\W_\ell$ can itself again be calculated
iteratively by an additive or multiplicative Schwarz 
or subspace decomposition method based on the splitting 
of $\W_\ell$ into the finite element space $\S$, that 
provides for the global exchange of information, and 
local subspaces like
\begin{equation}    \label{eq3.16}
\W_{\ell,i}=\,
\span\{\sp\varphi_i-F_\nu\varphi_i\,|\,\nu=0,\ldots,\ell\,\},
\end{equation}
bearing the information on the fine scale structure
of the solution. 

The proof of Theorem~\ref{thm3.3} shows that one can 
replace the $\ell+1$ functions (\ref{eq3.7}) attached 
to the nodes $x_i$ in the interior of $\Omega$ by a 
single, fixed linear combination
\begin{equation}    \label{eq3.17}
\sum_{\nu=0}^\ell\alpha_{\ell\nu}(\varphi_i-F_\nu\varphi_i),
\quad 
\sum_{\nu=0}^\ell\alpha_{\ell\nu}=1,
\end{equation}
without sacrificing the error bound (\ref{eq3.15}). 
This considerably reduces the size of the spaces 
$\W_\ell$. The problem is that the optimum  
coefficients $\alpha_{\ell\nu}$ depend on the 
entire spectrum of the operator $T$ from the 
kernel $\V$ to itself. In a much simplified 
variant, approximations $C_\ell$ of the projection 
$C$ are determined via a recursion
\begin{equation}    \label{eq3.18}
C_{\ell+1}u=C_\ell u+\omega\sp T(u-C_\ell u),
\end{equation}
where $C_0 u=0$ is set and $\omega\geq 1/K_2$ is 
a damping parameter whose optimal value depends 
again on the end points of the spectrum. Because 
in this case
\begin{equation}    \label{eq3.19}
C-C_\ell=(I-\omega\sp T)^\ell C,
\end{equation}
convergence is guaranteed at least for $\omega<2/K_2$, 
and for $\omega=1/K_2$ in particular. The functions 
$\varphi_i-C_\ell\varphi_i$ spanning the new spaces 
$\W_\ell$ can be calculated in the same way as the 
functions $\varphi_i-F_\nu\varphi_i$. The convergence 
rate degrades, however, with this version from 
(\ref{eq3.14}) to a value not better than
\begin{equation}    \label{eq3.20}
q=\frac{\kappa-1}{\kappa+1},
\end{equation}
and in the extreme case to $q=1-1/(K_1K_2)$ if 
$\omega=1/K_2$ is chosen. This means that 
possibly a larger number $\ell$ of iterations 
and layers, respectively, is needed.


\section{Discretization}
\label{sec4}

The infinite dimensional subspaces $\V_i$ of the kernel
of the projection $\Pi$ have to be replaced by discrete 
counterparts to obtain a computationally feasible method. 
We start from a potentially very strong, uniform or 
nonuniform refinement $\T'$ of the triangulation $\T$, 
bridging the scales and resolving the oscillations of 
the coefficient functions, and a finite element space 
$\S'\subseteq H_0^1(\Omega)$ that consists of the 
continuous functions whose restrictions to the elements 
in $\T'$ are linear. The whole theory then literally 
transfers to the present situation replacing only 
the continuous solution space $H_0^1(\Omega)$ and its
subspaces by their discrete counterparts. The only
modification concerns the construction of the stable
decomposition of the functions~$v$ in the kernel of
$\Pi$ into a sum of functions in the corresponding 
local subspaces
\begin{equation}    \label{eq4.1} 
\V_i=\{v-\Pi v\,|\,v\in \S'\cap H_0^1(\omega_i)\}.
\end{equation}
To construct such a decomposition, we use the interpolation 
operator $\I:C(\bar{\Omega})\to\S'$ that interpolates 
at the nodes of usual kind and reproduces the functions 
in $\S'$. As the operator $\I$ is linear and the 
$\varphi_i$ form a partition of unity, we can decompose 
the functions $v\in\S'$ in the kernel of $\Pi$ into 
the sum of the functions 
\begin{equation}    \label{eq4.2}
v_i=\I(\varphi_i v)-\Pi\sp (\I(\varphi_i v))
\end{equation}
in the modified subspaces $\V_i$. The stability of 
this decomposition in the sense of (\ref{eq3.9}) 
can be deduced in the same way as the stability of 
the decomposition in the proof of Lemma~\ref{lm3.1} 
since for the functions $v\in\S'$ in the kernel of 
$\Pi$ an estimate
\begin{equation}    \label{eq4.3}
|\I(\varphi_i v)|_1\leq c\,|\varphi_i v|_1
\end{equation}
holds, which is shown separately for the single
elements $t\in\T'$ using that the restrictions of 
such functions $\varphi_i v$ to these elements are 
second order polynomials.


\section*{Appendix. Analysis of a local projection operator}

For the convenience of the reader, we give in this appendix 
a comparatively detailed proof of the estimates (\ref{eq2.5}) 
for the exemplary local linear projection operator~$\Pi$ 
described in Section~\ref{sec2}. Let $\N$ be the set of the 
indices of the vertices $x_i$ of the finite elements in 
the interior of $\Omega$. The operator $\Pi$ can then be 
written as
\begin{displaymath}
\Pi v=\sum_{i\,\in\N}\alpha_i\varphi_i,
\end{displaymath} 
with coefficients $\alpha_i$ that depend linearly on $v$ 
and are calculated as follows. At first, the given function 
$v$ is locally, separately on each single finite element 
and regardless of the continuity across the boundaries 
of the elements, approximated by its $L_2$-orthogonal 
projection onto the space of linear functions. The 
coefficient $\alpha_i$ is then a weighted mean of the 
values of these linear functions at the node $x_i$ under 
consideration, weighted according to the contribution of 
the involved elements to the area or the volume of the 
patch $\omega_i$, the support of the basis function 
$\varphi_i$. The constant functions $x\to\alpha_i$ 
satisfy the local $L_2$-norm estimate
\begin{displaymath}
\|\alpha_i\|_{0,\omega_i}\lesssim\|v\|_{0,\omega_i}
\end{displaymath}
over these patches, where the constant on the 
right-hand side depends solely on the space dimension.
Let the patch $\omega_i$ be the union of the elements 
$t_1,\ldots,t_n$ and let $v_1,\ldots,v_n$ be the 
corresponding local $L_2$-orthogonal projections of $v$
onto the space of linear functions. Let $|\omega_i|$ 
and $|t_k|$ be the areas or volumes of $\omega_i$ and 
the $t_k$. Then
\begin{displaymath}
\|\alpha_i\|_{0,\omega_i}^2=\,
|\omega_i|\,\bigg(\frac{1}{|\omega_i|}\sum_{k=1}^n|t_k|v_k(x_i)\bigg)^2
\leq\,\sum_{k=1}^n|t_k|v_k(x_i)^2.
\end{displaymath}
Transformation to a reference element yields the estimate
\begin{displaymath}
\sum_{k=1}^n|t_k|v_k(x_i)^2\lesssim\sum_{k=1}^n\|v_k\|_{0,t_k}^2
\end{displaymath}
of the right-hand side, with a constant that depends 
only on the space dimension. As the $L_2$-norms of 
the linear functions $v_k$ over the $t_k$ are less 
than or equal to the $L_2$-norms of $v$ over the $t_k$, 
this proves the estimate above for the $\alpha_i$.

To prove the estimates (\ref{eq2.5}), we use that the
functions in $H_0^1(\Omega)$ can be considered as
functions in $H^1(\mathbb{R}^2)$ and $H^1(\mathbb{R}^3)$, 
respectively, with value zero outside $\Omega$. Because 
the hat functions $\varphi_i$ form a partition of unity 
on $\Omega$, 
\begin{displaymath}
v-\Pi v=\sum_{i\,\in\N}\varphi_i\sp(v-\alpha_i)
+\sum_{i\,\notin\N}\varphi_i v.
\end{displaymath}
On a given element only the functions $\varphi_i$ 
assigned to its vertices are different from zero. 
The square of the $H^1$-seminorm of the error can 
therefore be estimated as
\begin{displaymath}
|v-\Pi v|_1^2\,\lesssim
\sum_{i\,\in\N}|\varphi_i\sp (v-\alpha_i)|_1^2
+\sum_{i\,\notin\N}|\varphi_i v|_1^2.
\end{displaymath}
Let $B_i$ be the ball with center $x_i$ of minimum
diameter that covers the patch $\omega_i$ and let
$h_i$ be its radius. As 
$|\nabla\varphi_i|\lesssim h_i^{-1}$ by the shape 
regularity of the elements, 
\begin{displaymath}
|\varphi_i\sp(v-\alpha_i)|_1\lesssim
h_i^{-1}\|v-\alpha_i\|_{0,\omega_i}+
|\sp v\sp|_{1,\omega_i}.
\end{displaymath}
As the linear functional $v\to\alpha_i$ reproduces 
the value of constant functions,
\begin{displaymath}
\|v-\alpha_i\|_{0,\omega_i}\lesssim
\|v-\alpha\|_{0,\omega_i}\leq\|v-\alpha\|_{0,B_i}
\end{displaymath}
holds for all constants $\alpha$, and, in particular,
for the mean value $\alpha$ of the function $v$ over 
the ball $B_i$. The Poincar\'{e} inequality for 
balls leads therefore to the estimate
\begin{displaymath}
\|v-\alpha_i\|_{0,\omega_i}\lesssim h_i|\sp v\sp|_{1,B_i},
\end{displaymath}
with a constant that is, of course, independent 
of the radius $h_i$ of $B_i$. For the terms 
associated with the inner vertices, thus finally 
\begin{displaymath}
|\varphi_i\sp (v-\alpha_i)|_1\lesssim|\sp v\sp|_{1,B_i}.
\end{displaymath}
The boundary terms can be treated with a local variant
of the Friedrichs inequality. Here we prefer to proceed
in a similar way as with the inner vertices. At first,
\begin{displaymath}
|\varphi_i v|_1\lesssim
h_i^{-1}\|v\|_{0,\omega_i}+|\sp v\sp|_{1,\omega_i}.
\end{displaymath}
As polygonal domain, $\Omega$ satisfies an exterior 
cone condition. That is, for the $B_i$ assigned to 
the vertices $x_i$ on the boundary of $\Omega$ 
there is constant $c$ such that
\begin{displaymath}    
|B_i\cap\Omega|\leq c\,|B_i\!\setminus\!\Omega\sp|.
\end{displaymath}
This constant is independent of $i$ and depends at 
most on an upper bound for the diameters of the balls. 
The $L_2$-distance of a function $v$ in $L_2(B_i)$ 
to its mean value over the part of $B_i$ outside 
$\Omega$ can therefore, analogously to the reasoning 
above, be estimated by its $L_2$-distance to the mean 
value over $B_i$ itself. As the mean value of the 
given functions $v$ over the part of $B_i$ outside 
of $\Omega$ is zero, this leads, again by means of 
the Poincar\'{e} inequality for balls, to the 
estimate
\begin{displaymath}
\|v\|_{0,\omega_i}\lesssim h_i|\sp v\sp|_{1,B_i}.
\end{displaymath}
For the terms associated with the vertices on the 
boundary, then
\begin{displaymath}
|\varphi_i v|_1\lesssim |\sp v\sp|_{1,B_i}. 
\end{displaymath}
Since the balls $B_i$ form, because of the shape regularity 
of the finite elements, a locally finite covering of $\Omega$, 
one obtains finally the estimate
\begin{displaymath}
|v-\Pi v|_1\lesssim |\sp v\sp|_1,
\end{displaymath}
with a constant that depends only on the shape regularity 
of the finite elements and the constant from the exterior 
cone condition in the form given above. This implies the 
stability of $\Pi$. Using that for all square integrable 
functions $w$
\begin{displaymath}
\|h^{-1}\varphi_i w\|_0\lesssim h_i^{-1}\|w\|_{0,\omega_i}
\end{displaymath}
holds, the approximation property follows by the same 
arguments.


\bibliographystyle{amsplain}
\bibliography{paper}


\end{document}